\newcommand{\be}{\begin{equation}}
\newcommand{\ee}{\end{equation}}
\newcommand{\beq}{\begin{eqnarray}}
\newcommand{\eeq}{\end{eqnarray}}
\newtheorem{thm}{Theorem}[section]
\newtheorem{lma}{Lemma}[section]
\theoremstyle{remark}
\numberwithin{equation}{section}
\def\be{\begin{equation}}
\def\ee{\end{equation}}
\def\bee{\begin{equation*}}
\def\eee{\end{equation*}}
\def\ol{\overline}
\def\lf{\left}
\def\ri{\right}
\def\vn{{\vec\nu}}
\def\bn{\mathbf{n}}
\def\Ric{\text{\rm Ric}}
\def\cK{\mathcal{K}}
\def\wt{\widetilde}
\def\la{\langle}
\def\ra{\rangle}
\def\p{\partial}
\def\ol{\overline}
\def\e{\varepsilon}
\def\a{{\alpha}}
\def\b{{\beta}}
\def\R{\mathbb{R}}
\def\oRm{{\overline{\mathrm{Rm}}}}
\def\n{\nabla}
\def\on{{\overline{\nabla}}}
\def\oR{{\overline{R}}}
\begin{document}

\title[]
{ Some  estimates on stable minimal hypersurfaces in Euclidean space}

\author{Luen-Fai Tam}
\address[Luen-Fai Tam]{The Institute of Mathematical Sciences and Department of Mathematics, The Chinese University of Hong Kong, Shatin, Hong Kong, China.}
 \email{lftam@math.cuhk.edu.hk}

\renewcommand{\subjclassname}{
  \textup{2020} Mathematics Subject Classification}
\subjclass[2020]{Primary 53C42}
\keywords{stable minimal hypersurfaces, bi-Ricci curvature, $\mu$-bubble}
\date{\today}

\begin{abstract} We derive some estimates for stable minimal hypersurfaces in $\R^{n+1}$. The estimates are related to recent proofs of Bernstein theorems for complete stable minimal hypersurfaces in $\R^{n+1}$ for $3\le n\le 5$ by Chodosh-Li \cite{ChodoshLi2023}, Chodosh-Li-Minter-Stryker \cite{ChodoshLiMinterStryker2024} and Mazet \cite{Mazet2024}. In particular, the estimates indicate that the methods in their proofs may not work for $n=6$,  which is  observed also by Antonelli-Xu \cite{AntonelliXu2024-2} and Mazet \cite{Mazet}. The method of derivation in this work might  also be applied to other problems.

\end{abstract}

\maketitle

\markboth{Luen-Fai Tam}{Some estimates on stable minimal hypersurfaces}
\section{Introduction}\label{s-intro}
In \cite{ChodoshLi2023},   Chodosh-Li proved that a complete stable minimal hypersurface in $\R^{n+1}$ is a hyperplane for $n=3$. In \cite{ChodoshLiMinterStryker2024}, Chodosh-Li-Minter-Stryker proved that this is   true for $n=4$. In \cite{Mazet2024}, Mazet proved that this is true for $n=5$. For   history and development of  Bernstein type results for stable minimal hypersurfaces in Euclidean space, please consult \cite{ChodoshLi2023,ChodoshLiMinterStryker2024,Mazet2024}. Similar to the recent work by Antonelli-Xu \cite{AntonelliXu2024-2}, we want to have a better understanding of the methods in  \cite{ChodoshLi2023,ChodoshLiMinterStryker2024,Mazet2024}, and possible obstructions for these methods in case $n=6$. In this work, we will obtain some estimates similar to those in  \cite{ChodoshLiMinterStryker2024,Mazet2024}  in the form which might be more easy to use.

 To state our main result, let $M^n$ be a complete two-sided stable minimal hypersurface in $\R^{n+1}$. We only consider the case that $3\le n\le 6$. Let $G$ be the standard metric on $\R^{n+1}$. Following \cite{ChodoshLi2023,ChodoshLiMinterStryker2024,Mazet2024}, consider the conformal metric $\ol G=r^{-2}G=e^{-2s}G$, with $s=\log r$, where $r$ is the Euclidean distance from the origin. Let $g, \ol g$ be the metrics on $M$ induced by $G, \ol G$ respectively. Let $w>0$ be the smooth function given by \eqref{e-w} in section \ref{s-mu}. Consider a $\mu$-bubble in $(M, \ol g)$ given by $w^a$ for some $a\ge 1$ and some $h$. More precisely,  let $\Omega_-\subset \Omega_+\subset M$ be  domains with nonempty boundary $\p\Omega_-, \p\Omega_+$   respectively, so that $ \Omega_+\setminus \ol\Omega_-$ is bounded. Let $h$ be a smooth function on $ \Omega_+\setminus \ol\Omega_-$ with $h\to  \infty$ on $\p\Omega_-$ and $h\to-\infty$ on  $\p\Omega_+$. Fix a domain $\Omega_0$ with $\Omega_-\Subset \Omega_0\Subset \Omega_+$ and let $\Sigma$ be a $\mu$-bubble which is the boundary of a minimizer of
\be\label{e-mu-bubble}
E(\Omega)=\int_{\p^*\Omega}w^a-\int (\chi_{\Omega}-\chi_{\Omega_0}) h w^a.
\ee
where $1\le a<4$. We have the following estimates:
\begin{thm}\label{t-main} For $1\le a<4$, and $0\le \a\le1$,
\bee
\begin{split}
\frac4{4-a}\int_\Sigma |\n^\Sigma \phi|^2 d\mu_\Sigma\ge& \int_\Sigma\bigg[(n-2)\lf(\a-\frac{(n-2)a}4\ri)-\a \lambda_\Sigma-|\on h|\\
&\ \ \ \ +P(x,y,\omega)+Q(H,z,h)\bigg] \phi^2 d\mu_\Sigma
\end{split}
\eee
where $\n^\Sigma$ is the derivative of $\Sigma$, $\on$ is the derivative of $(M,\ol g)$, and at each point $\lambda_\Sigma$ is the infimum of $\Ric^\Sigma$ at that point. Here $P,Q$ are quadratic forms given by
\bee
\begin{split}
P(x,y,\omega)=&\lf(n-1 +\lf(\frac {n^2}{n-2}-\frac{ (n^2+4n-4)}4\ri)a\ri)\omega^2
+( \frac {  n-1   }{n-2}a-1)x^2\\
&+(\frac { n-1 }{n-2}a-\a)y^2 +(\frac {2a}{n-2}-\a)xy
+n\lf((\frac {2a}{n-2}-1)x  +(\frac{2a}{n-2}-\a)y \ri)\omega
\end{split}
\eee
for some functions $x, y, \omega$ on $\Sigma$ with $\omega^2\le 1$; and
\bee
\begin{split}
Q(H,z,h)=&\lf( \frac1a-\frac{n-3}{n-2}\ri)H^2+( \frac{n-1}{n-2}-\a)z^2
 +\frac1a h^2\\
 &+\lf(\a-\frac 2{n-2}\ri) zH+\lf(1-\frac2a\ri)hH
\end{split}
\eee
for some functions $H, z$ on $\Sigma$.
\end{thm}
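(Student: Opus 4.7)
The plan is to combine three ingredients: the weighted second-variation inequality for $\Sigma$ as a minimizer of $E$ in \eqref{e-mu-bubble}; the defining equation \eqref{e-w} for $w$, which packages the stability of $M\subset\R^{n+1}$; and the conformal-change identities for $\ol g=e^{-2s}g$ together with the Gauss-type identity on $\Sigma\subset(M,\ol g)$. First I would write down the weighted stability of $\Sigma$: because $\Sigma$ minimizes $E$, for any test function $\eta$ compactly supported on $\Sigma$,
\begin{equation*}
\int_\Sigma w^a\Bigl[|\n^\Sigma\eta|^2-\bigl(|\ol A|^2+\oRic(\vn,\vn)+a\,\on^2\log w(\vn,\vn)\bigr)\eta^2-\bigl(h\ol H+\on h\cdot\vn\bigr)\eta^2\Bigr]\,d\mu_\Sigma\ge 0,
\end{equation*}
where $\ol A,\ol H$ are the second fundamental form and mean curvature of $\Sigma$ in $(M,\ol g)$ and $\vn$ is the $\ol g$-unit normal. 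Substituting $\eta=w^{-a/2}\phi$ unweights the inequality; the chain rule generates a cross term $a\phi\,\n^\Sigma\phi\cdot\n^\Sigma\log w$ and a quadratic term $\tfrac{a^2}{4}\phi^2|\n^\Sigma\log w|^2$, and integration by parts rewrites the cross term as $\tfrac{a}{2}\int_\Sigma\phi^2\ol\Delta^\Sigma\log w\,d\mu_\Sigma$.

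Next I would substitute \eqref{e-w} for $\ol\Delta^M\log w$ and use the decomposition $\ol\Delta^\Sigma\log w=\ol\Delta^M\log w-\on^2\log w(\vn,\vn)-\ol H\,(\on\log w\cdot\vn)$ to push every remaining $w$-dependent quantity to $\Sigma$-intrinsic or $M$-intrinsic data. The surviving gradient cross term is then absorbed by a Cauchy--Schwarz inequality with the optimal weight, producing precisely the factor $\tfrac{4}{4-a}$ in front of $\int_\Sigma|\n^\Sigma\phi|^2$ and exposing the critical exponent $a=4$ (hence the restriction $a<4$). The Gauss and Ricci identities on $\Sigma\subset(M,\ol g)$, combined with the lower bound $\Ric^\Sigma\ge\lambda_\Sigma$, then let me trade the interior contribution $\oRic(\vn,\vn)+|\ol A|^2$ for $\alpha\lambda_\Sigma$ plus ambient quantities, with $\alpha\in[0,1]$ controlling how much of the intrinsic Ricci is used.

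The third step unfolds every surviving quantity via the conformal identities for $\ol g=e^{-2s}g$: $\oRic$, $\ol\Delta s$, $|\ol A|^2$, $\ol H$, and $\on^2 s$ all become polynomials in $\n s$, $\n^2 s$ and the second fundamental form $A_M$ of $M$ in the flat Euclidean ambient (recall $M$ is minimal). Decomposing $\on s$ into its $\Sigma$-tangential and $\Sigma$-normal parts produces the variables $x,y$; $\omega$ arises as the component of the unit Euclidean radial vector along $\vn$, so $\omega^2\le 1$; $z$ is a related component coming from $A_M$ evaluated on $\vn$; while $\ol H$ and $h$ enter through the $\mu$-bubble equation. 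Regrouping coefficients by monomial in $(x,y,\omega)$ and in $(\ol H,z,h)$ produces the explicit quadratic forms $P$ and $Q$, while the remaining scalar pieces combine to give the constant $(n-2)(\alpha-(n-2)a/4)$ together with $-\alpha\lambda_\Sigma$ and $-|\on h|$ (the last using $|\on h\cdot\vn|\le|\on h|$). The main obstacle is this final bookkeeping: verifying that the coefficients of $P$ and $Q$ come out with the stated rational functions of $n$ and $a$ requires threading the conformal-change identities, the substitution from \eqref{e-w}, and the sharp Cauchy--Schwarz absorption through every term without sign or combinatorial mistakes. The conceptually delicate point is the optimized Cauchy--Schwarz that produces $\tfrac{4}{4-a}$ and makes the threshold $a=4$ manifest; once that is in place, the rest of the argument is a long but routine computation.
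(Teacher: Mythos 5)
Your overall scheme --- unweight the $\mu$-bubble second variation via $\eta=w^{-a/2}\phi$, absorb the cross term by sharp Cauchy--Schwarz to produce $\tfrac{4}{4-a}$, feed in \eqref{e-w} for the stability of $M$, and then unfold everything through the conformal identities --- is indeed the path the paper takes. The paper packages the first two steps as Lemma~\ref{l-bubble-2} (imported from Mazet's (12)), so your derivation of the $\tfrac{4}{4-a}$ prefactor is just re-proving that lemma, which is fine. Likewise your use of the Gauss identity for $\Sigma\subset(M,\ol g)$ to expose an $\alpha$-weighted combination of intrinsic and ambient curvature is the right idea; the paper makes it precise in Lemmas~\ref{l-biRicci} and~\ref{l-bubble-3} by exhibiting
\[
\ol\Ric^M(\vn,\vn)=\ol{\mathrm{BiRic}}^M(e_1,\vn)-\Ric^\Sigma(e_1,e_1)+\sum_{j\ge 2}(A_{11}A_{jj}-A_{1j}^2)
\]
and then convex-combining this with $\ol\Ric^M(\vn,\vn)$ itself with weights $\alpha,1-\alpha$. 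You gesture at this but never isolate the bi-Ricci quantity, and that isolation is essential: the $\alpha$-bi-Ricci combination is exactly what reappears in Lemma~\ref{l-W+BiRic} as the object the stability term $aW$ must control.

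The genuine gap is in the final, decisive step --- the one you call ``bookkeeping'' and defer --- together with a misidentification of the variables that would prevent you from carrying it out. In the theorem, $\omega=\ol\bn(s)$ is the $\ol G$-component of $\ol Ds$ along the unit normal $\ol\bn$ of $M$ in $(\R^{n+1},\ol G)$ (so $\omega=-\tfrac1n\ol H$), \emph{not} the component along $\vn$, the normal of $\Sigma$ in $M$; $x=\ol\cK_{11}$ and $y=\ol\cK_{22}$ are diagonal entries of the second fundamental form $\ol\cK$ of $M$ in $(\R^{n+1},\ol G)$ in the frame $e_1=$ direction of least $\Ric^\Sigma$, $e_2=\vn$, and have nothing to do with the tangential/normal split of $\on s$; and $z=A_{11}$ is a component of the second fundamental form of $\Sigma$ in $M$, not of $M$ in the ambient. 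With your stated identifications the regrouping into $P$ and $Q$ would not close: the coefficients of $P$ come from expanding $aW+\alpha\ol{\mathrm{BiRic}}^M(\ol e_1,\ol e_2)+(1-\alpha)\ol\Ric^M(\ol e_1,\ol e_1)$ via the Gauss equation of $M$ in the conformal flat metric (Lemma~\ref{l-curvature-conformal}) and a Cauchy--Schwarz on $\sum_{i\ge3,j}\ol\cK_{ij}^2\ge\tfrac{1}{n-2}(\ol H-x-y)^2$, while the coefficients of $Q$ come from the $\mu$-bubble first variation $\vn(\log w)=a^{-1}(h-H)$ and a Cauchy--Schwarz on $\sum_{j\ge2}A_{jj}^2\ge\tfrac1{n-2}(H-A_{11})^2$. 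These two reductions, which carry the entire content of the theorem, are absent from the proposal and cannot be recovered from the variable identifications you give.
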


Since $P, Q$ are quadratic forms in three variables, it is relatively easy to estimate $P, Q$ from below. Suppose we can find $1\le a<4, 0\le \a\le 1$ and $h$ such that:
\begin{enumerate}
  \item [(i)] $(n-2)(\a-\frac{(n-2)a}4)>\delta$;
  \item [(ii)] $P\ge 0$;
  \item [(iii)] $Q\ge ch^2$ for some $c>0$;
  \item[(iv)] $ch^2-|\on h|+\e\ge0$ for some $0<\e<\delta$.
\end{enumerate}
Then we have
\bee
\frac4{4-a}\int_\Sigma |\n \phi|^2 d\mu_\Sigma\ge  \int_\Sigma(\delta-\e-\a \lambda_\Sigma)\phi^2 d\mu_\Sigma
\eee
for all smooth function $\phi$ on $\Sigma$. If in addition $\frac{4}{4-a}\le \frac{\a(n-3)}{n-2}$, then one can apply the comparison theorem of Antonelli-Xu \cite{AntonelliXu2024-1} to obtain upper bound of the area of $\Sigma$ and obtain some Bernstein type theorem for stable minimal hypersurfaces. For $3\le n\le 5$, these are results by Chodosh-Li, Chodosh-Li-Minter-Stryker and Mazet \cite{ChodoshLi2023,ChodoshLiMinterStryker2024,Mazet2024}. In case $n=6$, one can see (i) in above cannot be true because $a\ge1\ge\a$. (iii) is also not true   in case that $n=6$. These can be considered   as obstructions to apply   the methods in case $n=6$. Hence the method may not work for $n=6$. This is also observed by by Antonelli-Xu \cite{AntonelliXu2024-2} and Mazet \cite{Mazet}.
 
 On the other hand, $\a$ in the theorem is the same $\a$ in the definition of $\a$-$\mathrm{biRicci}$ introduced in \cite{Mazet2024}. This is actually  a convex combination of bi-Ricci curvature and Ricci curvature which arise naturally in the second variational of $\mu$-bubble, see Lemma \ref{l-bubble-3} for more details. For the definition of biRicci curvature see \eqref{e-biRicci} . When $\a=1$, then $\a$-$\mathrm{biRicci}$ curvature is the same as biRicci curvature of $M$. In case $n=5$ and $\a=1$, then
  $Q\le 0$ if $z=h=H/2$ and so (iii) may  not be true. Hence in this case we need to choose $0<\a<1$ as in Mazet's work \cite{Mazet2024} and we cannot just use bi-Ricci curvature.

  The organization of the paper is as follows. In section \ref{s-basic}, we derive a version of stability condition. In section \ref{s-mu}, we state a property for $\mu$-bubble. In section \ref{s-main}, we will prove the main theorem and in section \ref{s-application} we will indicate some applications of the estimates in the main theorem.

\section{Basic facts}\label{s-basic}

Recall the following, see \cite[Theorem 7.30]{Lee}. Let $(N,g)$ be a Riemannian manifold. Consider the conformal metric $\wt g=e^{2f}g$. Let  $\wt {\mathrm{Rm}},  {\mathrm{Rm}}$  be the curvature tensors of $\wt g, g$ respectively. Then
\be\label{e-curvature-coformal}
\wt {\mathrm{Rm}}=e^{2f}\lf[\mathrm{Rm}-\nabla^2_g f\owedge g+(df\otimes df)\owedge g-\frac12|df|^2_g g\owedge g\ri].
\ee
Here  $\owedge$ be the Kulkarni-Nomizu product. Namely, for symmetric $(0,2)$ tensors $S, T$ on a manifold,
\be\label{e-KNproduct}
\begin{split}
(S\owedge T)(X,Y,Z,W):=&S(X,W)T(Y,Z)+S(Y,Z)T(X,W)\\
&-S(X,Z)T(Y,W)-S(Y,W)T(X,Z)
\end{split}
\ee
for tangent vectors $X,Y,Z, W$.

Let $G$ be the Euclidean metric on $\R^{n+1}$.   Let $s=\log r$ and let
\be
\ol G= e^{-2s}G=r^{-2}G
\ee
where $r$ is the Euclidean distance to the origin.
 Let $M^n\subset \R^{n+1}$ be a hypersurface in $\R^{n+1}$ with induced metric $g, \ol g$ from $G, \ol G$ respectively. Denote:

\begin{itemize}
  \item $D, \ol D$, the covariant derivatives of $G, \ol G$;
  \item $\n, \on$  the covariant derivatives of $g, \ol g$.
\end{itemize}
.

\begin{lma}\label{l-curvature-conformal} Let $\oRm $ be the curvature tensor of $\ol G $. Then
\bee
\oRm=\frac12\ol G\owedge\ol G-(ds\otimes ds)\owedge \ol G.
\eee
\end{lma}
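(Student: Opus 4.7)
The plan is to apply the conformal transformation formula \eqref{e-curvature-coformal} with the base metric being $G$ itself, since $G$ is flat and so its curvature tensor vanishes, killing one term outright. Writing $\ol G = e^{2f}G$ forces $f = -s$, so first I would note $df = -ds$, $df\otimes df = ds\otimes ds$, $\nabla^2_G f = -D^2 s$, and $|df|_G^2 = |ds|_G^2$. Plugging into \eqref{e-curvature-coformal} gives
\bee
\oRm = e^{-2s}\lf[D^2 s\owedge G + (ds\otimes ds)\owedge G - \tfrac12|ds|_G^2\, G\owedge G\ri].
\eee

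Next I would compute $D^2 s$ and $|ds|_G^2$ explicitly from $s=\log r$. In Euclidean coordinates $\partial_i s = x_i/r^2$ and $\partial_i\partial_j s = \delta_{ij}/r^2 - 2x_i x_j/r^4$, which is precisely the identity
\bee
D^2 s = \frac{1}{r^2} G - 2\,ds\otimes ds, \qquad |ds|_G^2 = \frac{1}{r^2}.
\eee
This is the only genuine calculation in the proof, and it is also where the desired cancellation is set up: when $D^2 s \owedge G$ is expanded, the $-2(ds\otimes ds)\owedge G$ piece combines with the $+(ds\otimes ds)\owedge G$ term from the conformal formula to yield a single $-(ds\otimes ds)\owedge G$, while the $\frac{1}{r^2}G\owedge G$ piece combines with $-\frac{1}{2r^2}G\owedge G$ to give $\frac{1}{2r^2}G\owedge G$.

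Finally I would pull the factor $e^{-2s}=r^{-2}$ through: since $\ol G = r^{-2}G$, one of the $r^{-2}$ factors converts $G$ to $\ol G$ inside each Kulkarni-Nomizu product, giving
\bee
\oRm = \tfrac12 \ol G\owedge\ol G - (ds\otimes ds)\owedge \ol G,
\eee
as claimed. There is no serious obstacle; the only place to be careful is sign-tracking in the substitution $f=-s$ (which flips the sign in front of $\nabla^2_g f$) and correctly identifying the Hessian $D^2 s$ in terms of $ds\otimes ds$ so that the two $(ds\otimes ds)\owedge G$ contributions collapse to one.
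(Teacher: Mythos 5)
Your proof is correct and follows essentially the same route as the paper: apply the conformal change formula with $f=-s$, use that $G$ is flat, substitute $D^2 s = e^{-2s}G - 2\,ds\otimes ds$ and $|ds|^2_G = e^{-2s}$, and collect terms. The only difference is that you spell out the coordinate computation of $D^2 s$ and $|ds|^2_G$ from $s=\log r$, which the paper states without derivation.
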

\begin{proof} Let $\mathrm{Rm}$ be the curvature tensor of $G$, then
\bee
\oRm=e^{-2s}\lf[\mathrm{Rm}+D^2 s\owedge G+(ds\otimes ds)\owedge G-\frac12|ds|^2_G G\owedge G\ri].
\eee
Now, $\mathrm{Rm}=0$, $D^2s=e^{-2s}G-2ds\otimes ds$, $|ds|^2_G=|Ds|^2_G=e^{-2s}$.
So
\bee
\begin{split}
\oRm=&e^{-2s}\lf[ \lf(e^{-2s}G-2ds\otimes ds\ri)\owedge G+(ds\otimes ds)\owedge G-\frac12 e^{-2s}G\owedge G\ri]
\\
=&\frac12 e^{-4s}G\owedge G-e^{-2s}(ds\otimes ds)\owedge G\\
=&\frac12\ol G\owedge\ol G-(ds\otimes ds)\owedge \ol G.
\end{split}
\eee
\end{proof}

 \begin{lma}\label{l-second fundamental}
 Let $M^n$ be a hypersurface in $\R^{n+1}$ with unit normal $\bn$ with respect to $G$, and let $\ol\bn=e^s\bn$ be the unit normal with respect to $\ol G$. Let  $\cK, \ol\cK$ be the second fundamental forms with respect to $G, \ol G$. Then
\bee
\cK 
=e^s\lf(\ol\cK +\ol\bn(s) \ol g \ri).
\eee
In particular, if $H, \ol H$ are the mean curvatures with respect to $G, \ol G$, then
$$
H=e^{-s}(\ol H+n\ol\bn(s)).
$$
If $M$ is minimal with respect to $G$, then $\ol H=-n\ol \bn(s)=:-n\omega$.
 \end{lma}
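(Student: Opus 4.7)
The plan is to read both identities off a single substitution: express $\ol D$ in terms of $D$ using the standard conformal-change rule for the Levi-Civita connection, then take the component along $\bn$ on tangent arguments.

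First I would write down the transformation rule. Since $\ol G = e^{-2s}G$, i.e.\ $\ol G = e^{2\phi}G$ with $\phi = -s$, the standard formula reads
\be
\ol D_X Y = D_X Y - X(s)\,Y - Y(s)\,X + G(X,Y)\,Ds
\ee
for vector fields $X,Y$ on $\R^{n+1}$.

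Next I would fix the sign convention so that it matches the statement: take $\cK(X,Y) := -G(D_X Y,\bn) = G(D_X\bn,Y)$ (the convention giving positive second fundamental form on a round sphere with outward normal), and define $\ol\cK$ analogously relative to $\ol G$ and $\ol\bn$. The assertion $\ol\bn = e^s\bn$ is then forced by $|\bn|_G = 1$ together with $\ol G = e^{-2s}G$, and it yields $\ol G(\cdot,\ol\bn) = e^{-s}G(\cdot,\bn)$.

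The main computation is to substitute the connection rule into $\ol\cK(X,Y) = -\ol G(\ol D_X Y,\ol\bn)$ for $X,Y$ tangent to $M$. Tangency kills the terms containing $G(X,\bn)$ and $G(Y,\bn)$, while $G(Ds,\bn) = \bn(s)$. Converting $G(X,Y) = e^{2s}\ol g(X,Y)$ on $TM$ and $\bn(s) = e^{-s}\ol\bn(s)$ should produce
\bee
\ol\cK(X,Y) \;=\; e^{-s}\cK(X,Y) \;-\; \ol g(X,Y)\,\ol\bn(s),
\eee
which rearranges to the tensor identity $\cK = e^{s}\bigl(\ol\cK + \ol\bn(s)\,\ol g\bigr)$. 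Tracing against $g^{ij} = e^{-2s}\ol g^{ij}$ is then a one-line step and gives $H = e^{-s}\bigl(\ol H + n\,\ol\bn(s)\bigr)$; specializing to $H = 0$ delivers the minimal case $\ol H = -n\omega$ with $\omega := \ol\bn(s)$.

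I do not expect any genuine obstacle — the whole argument is routine bookkeeping on top of the conformal-change formula. The only point requiring care is the sign convention for the second fundamental form: the opposite convention would flip the sign of the $\ol\bn(s)\,\ol g$ term, so one should fix it at the outset, for instance by a quick check against a round sphere or a radial line, to ensure compatibility with the formula as stated.
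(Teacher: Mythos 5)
Your computation is correct, and it follows the only reasonable path: apply the conformal-change rule for the Levi-Civita connection, use tangency to kill the $X(s)Y$ and $Y(s)X$ terms, and convert $G,\bn$ to $\ol g,\ol\bn$. The paper in fact gives no proof of this lemma at all, treating it as standard, so there is nothing to compare against; your write-up supplies exactly the bookkeeping that was omitted. Your remark about the sign convention is the right thing to flag: with $\cK(X,Y)=-G(D_XY,\bn)=G(D_X\bn,Y)$ the signs come out as stated (one can also confirm this by noting that a sphere $r=R$ becomes totally geodesic for $\ol G$, consistent with $\ol H=0$ when $H=n/R$ and $\omega=\ol\bn(s)=1$), whereas the opposite convention would give $\cK=e^s(\ol\cK-\ol\bn(s)\ol g)$ and $\ol H=+n\omega$; since the paper only ever uses $\omega^2$ and products like $\ol\cK_{ij}\omega$, the choice is immaterial downstream, but it is good practice to pin it down as you did.
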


\begin{lma}\label{l-stability} Suppose $M^n$ is stably minimal in $(\R^{n+1},G)$,  then for any $\psi\in C_0^\infty(M)$, we have
\bee
\int_M |\on \psi|^2 d\mu_{\ol g}\ge \int_M W\psi^2 d\mu_{\ol g}
\eee
where
$$
W=:|\ol\cK|_{\ol g}^2-\frac{n^2+4n-4}{4 }\omega^2-\lf(\frac{n-2}2\ri)^2
$$
with $\omega^2\le1$.
\end{lma}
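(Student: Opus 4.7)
The plan is to apply the classical stability inequality
\[
\int_M|\n\phi|_g^2\,d\mu_g\ge\int_M|\cK|_g^2\phi^2\,d\mu_g,\qquad \phi\in C_0^\infty(M),
\]
to a test function of the form $\phi=e^{-\frac{n-2}{2}s}\psi$ and massage the resulting inequality into the stated $\ol g$-form. First I would translate both sides into $\ol g$-quantities: since $\ol g=e^{-2s}g$ on $M$, one has $d\mu_g=e^{ns}d\mu_{\ol g}$ and $|\n\phi|_g^2=e^{-2s}|\on\phi|_{\ol g}^2$, so the left side is $\int_M e^{(n-2)s}|\on\phi|_{\ol g}^2\,d\mu_{\ol g}$. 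For the right side, Lemma \ref{l-second fundamental} gives $\cK_{ij}=e^s(\ol\cK_{ij}+\omega\,\ol g_{ij})$, so contracting with $g^{ij}=e^{-2s}\ol g^{ij}$ and using minimality $\ol H=-n\omega$ produces $|\cK|_g^2=e^{-2s}\bigl(|\ol\cK|_{\ol g}^2-n\omega^2\bigr)$. The stability inequality thus takes the form
\[
\int_M e^{(n-2)s}|\on\phi|_{\ol g}^2\,d\mu_{\ol g}\ge\int_M e^{(n-2)s}\bigl(|\ol\cK|_{\ol g}^2-n\omega^2\bigr)\phi^2\,d\mu_{\ol g}.
\]

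Plugging in $\phi=e^{-\frac{n-2}{2}s}\psi$ cancels the weight $e^{(n-2)s}$. Expanding the gradient and using $-(n-2)\la\on s,\on\psi\ra_{\ol g}\psi=-\tfrac{n-2}{2}\la\on s,\on(\psi^2)\ra_{\ol g}$ followed by integration by parts reduces the left side to
\[
\int_M\Bigl(|\on\psi|_{\ol g}^2+\tfrac{n-2}{2}(\ol\Delta s)\psi^2+\Bigl(\tfrac{n-2}{2}\Bigr)^2|\on s|_{\ol g}^2\psi^2\Bigr)d\mu_{\ol g},
\]
while the right side becomes $\int_M\bigl(|\ol\cK|_{\ol g}^2-n\omega^2\bigr)\psi^2\,d\mu_{\ol g}$.

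It then remains to evaluate $\ol\Delta_M s$ and $|\on s|_{\ol g}^2$. The key geometric observation, essentially free given the proof of Lemma \ref{l-curvature-conformal}, is that $\ol D^2 s\equiv 0$ on $\R^{n+1}$: from $D^2 s=e^{-2s}G-2\,ds\otimes ds$ and $|Ds|_G^2=e^{-2s}$, the standard conformal-change formula $\ol D^2 s=D^2 s+2\,ds\otimes ds-|Ds|_G^2\,G$ collapses to zero. Combining this with the hypersurface Laplacian decomposition $\ol\Delta_M s=\ol\Delta_{\R^{n+1}}s-\ol D^2 s(\ol\bn,\ol\bn)-\ol H\,\ol\bn(s)$ and with $\ol H=-n\omega$ from Lemma \ref{l-second fundamental} yields $\ol\Delta_M s=n\omega^2$. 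The orthogonal splitting $|Ds|_G^2=|\n s|_g^2+\bn(s)^2$ rescales to $|\on s|_{\ol g}^2=1-\omega^2$, which incidentally gives $\omega^2\le 1$. Substituting these into the bracket and collecting $\omega^2$ coefficients via the identity $-n-\tfrac{n(n-2)}{2}+\tfrac{(n-2)^2}{4}=-\tfrac{n^2+4n-4}{4}$ recovers $W$ exactly.

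I expect the main difficulty to be purely bookkeeping: tracking the various conformal weights and the sign conventions for $\cK$, $\ol H$, and the ambient-to-intrinsic decomposition of the Laplacian. The one substantive ingredient is the vanishing $\ol D^2 s\equiv 0$, which is what makes the $\omega^2$ coefficient come out to the stated clean value; everything else is direct algebra.
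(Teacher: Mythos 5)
Your proof is correct, and it takes a genuinely more self-contained route than the paper. The paper's own proof is a two-line citation: it invokes Proposition 3.10 of Chodosh--Li--Minter--Stryker (the conformally transformed stability inequality in terms of $r^2|\cK|_g^2$ and $|\n r|_g^2$) as a black box and then translates the ingredients to $\ol g$-quantities using Lemma~\ref{l-second fundamental}, exactly as you do in your first paragraph. What you add is a derivation of that Proposition 3.10 itself from the classical stability inequality, via the conformal test function $\phi = e^{-\frac{n-2}{2}s}\psi$ and integration by parts. Your observation that $\ol D^2 s \equiv 0$ on $(\R^{n+1}\setminus\{0\}, \ol G)$ is the right structural fact to organize this -- it is simply the statement that $\ol G = ds^2 + g_{S^n}$ is the product (cylindrical) metric, so $s$ is an affine coordinate -- and it immediately gives $\ol\Delta_M s = -\ol H\,\ol\bn(s) = n\omega^2$ by the tangential--normal splitting of the ambient Laplacian. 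One small caution worth flagging: the signs in the decomposition $\ol\Delta_M s = \ol\Delta_{\R^{n+1}} s - \ol D^2 s(\ol\bn,\ol\bn) - \ol H\,\ol\bn(s)$ and in $\cK = e^s(\ol\cK + \omega\,\ol g)$ are convention-dependent (they correspond to taking $\cK(X,Y) = -\la D_X Y, \bn\ra$), and both sign choices must be made consistently; you have done so, but since a sign slip in either place would flip the coefficient of $\omega^2$ and wreck the final constant $\frac{n^2+4n-4}{4}$, it is worth stating the convention explicitly. Aside from that, the algebra checks out: both your route and the paper's reduce to the same identity $n + \frac{n(n-2)}{2} - \frac{(n-2)^2}{4} = \frac{n^2+4n-4}{4}$. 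The trade-off is the usual one: the paper is shorter because it outsources the conformal computation; your version is longer but does not require the reader to chase down a proposition in a different preprint.
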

\begin{proof} This follows from \cite{ChodoshLiMinterStryker2024} and Lemma \ref{l-second fundamental}. In fact, by \cite[Proposition 3.10]{ChodoshLiMinterStryker2024},
\bee
\int_M |\on \psi|^2 d\mu_{\ol g}\ge \int_M \lf[(r^2|\mathcal{K}|_g^2-\frac{n(n-2)}2+\lf(\frac{n(n-2)}2-\frac{(n-2)^2}4\ri)|\n r|^2_g\ri]\psi^2 d\mu_{\ol g}
\eee
By Lemma \ref{l-second fundamental}
\bee
 |\cK|^2_g=e^{-4s}|\cK|^2_{\ol g}=e^{-2s}|\ol\cK+\ol \bn(s)\ol g|^2_{\ol g}=e^{-2s}(|\ol\cK|^2_{\ol g}-\frac1n\ol H^2)=e^{-2s}(|\ol\cK|^2_{\ol g}-n\omega^2).
 \eee
 Also
 \bee
 |\n r|^2_g=|\n e^s|^2_g=|\on s|^2_{\ol g}=1-(\ol \bn( s) )^2=1-\omega^2.
 \eee
Here we have used the fact that $|\ol Ds|^2=1$ and $|\ol Ds|^2=|\on s|^2_{\ol g}+(\ol \bn (s))^2$.
From these the result follows.
\end{proof}

\section{$\mu$-bubble}\label{s-mu}

By Lemma \ref{l-stability}, there is a positive function $w$ such that
\be\label{e-w}
-\ol\Delta  w= W w.
\ee
Here $\ol g$ is the metric on $M^n$ which is a stable minimal surface in $\R^{n+1}$, $\ol\Delta$ is it Laplacian.

Given $4>a\ge1$, $h$ is a suitable function. Let  $\Sigma^{n-1}\subset M^n$ be a $\mu$-bubble with respect to $w^a, h$ as defined in \eqref{e-mu-bubble}. In the following, $\on$, $\n^\Sigma$ are covariant derivatives of $M$ and $\Sigma$ and $\ol\Delta$ and $\Delta_\Sigma$ are the Laplacians of $M$, $\Sigma$, respectively. Denote the Ricci curvature of $M$ by $\ol\Ric^M$ and the Ricci curvature of $\Sigma$ by $\Ric^\Sigma$ etc. By the first and second variational formulas, see \cite{ChodoshLiMinterStryker2024,Mazet2024} we have:

\begin{lma}\label{l-bubble-1}\ \

\begin{enumerate}
  \item [(i)] First variational formula:
  \bee
H=h- a \vn (\log w).
\eee
$\vn$ is unit   normal and $H$ is the mean curvature.
  \item [(ii)] Second variational formula: For any smooth $\phi$ on $\Sigma$
\bee
\begin{split}
0\le &\int_{\Sigma}w^a\bigg\{(-\phi\Delta_\Sigma \phi-\bigg[(|A|^2+\ol\Ric^M(\vn,\vn)+a(\vn (\log w))^2 -a w^{-1}\on^2 w(\vn,\vn)+\vn(h)\bigg]\phi^2\\
&\ \ -a\la \on\log w,\n \phi\ra\phi\bigg\}
\end{split}
\eee
where $\on$ is the derivative of $M$, and $\ol \Ric$ is the Ricci curvature of $M$, $A$ is the second fundamental form of $\Sigma$.
\end{enumerate}

\end{lma}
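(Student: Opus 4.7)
The plan is to compute the first and second variations of the functional $E$ at the minimizing $\mu$-bubble $\Sigma = \p\Omega$ and extract (i) and (ii) respectively. Pick a compactly supported $\phi \in C^\infty(\Sigma)$, let $X_t: \Sigma \to M$ be a smooth normal variation with $X_0 = \mathrm{id}$ and $\tfrac{\p X_t}{\p t}\big|_{t=0} = \phi\,\vn$, and let $\Omega_t$ denote the resulting family with $\Sigma_t = \p\Omega_t$. Setting $I(t) = E(\Omega_t)$, minimality gives $I'(0) = 0$ and $I''(0) \ge 0$. The weighted first variation formula
\bee
\frac{d}{dt}\bigg|_{t=0}\int_{\Sigma_t} w^a \, d\mu_t = \int_\Sigma \bigl(H + a\,\vn(\log w)\bigr) w^a \phi\,d\mu_\Sigma
\eee
combined with the boundary-integral identity $\tfrac{d}{dt}\big|_0 \int_{\Omega_t} h w^a = \int_\Sigma h w^a \phi \, d\mu_\Sigma$ gives
\bee
I'(0) = \int_\Sigma w^a\bigl(H + a\vn(\log w) - h\bigr)\phi\,d\mu_\Sigma,
\eee
and arbitrariness of $\phi$ yields (i).

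For (ii), allow the variation to have time-dependent normal speed $\phi_t$ (equal to $\phi$ at $t=0$) and reinterpret
\bee
I'(t) = \int_{\Sigma_t} w^a\phi_t\bigl[H_t + a\vn_t(\log w) - h\bigr]\,d\mu_t.
\eee
Differentiating in $t$ produces product-rule terms in which either $w^a\phi_t$, the bracket $[\,\cdot\,]$, or the area element $d\mu_t$ is differentiated. By (i), the bracket vanishes on $\Sigma_0 = \Sigma$, so every term in which the bracket is not differentiated drops out, leaving
\bee
I''(0) = \int_\Sigma w^a \phi \cdot \frac{d}{dt}\bigg|_{t=0}\bigl[H_t + a\vn_t(\log w) - h\bigr] \,d\mu_\Sigma.
\eee

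The remaining task is to invoke three standard evolution identities along a normal variation with initial speed $\phi$: (a) the Jacobi identity $\tfrac{d}{dt}\big|_0 H_t = -\Delta_\Sigma \phi - \bigl(|A|^2 + \ol\Ric^M(\vn,\vn)\bigr)\phi$; (b) for any $f \in C^\infty(M)$,
\bee
\tfrac{d}{dt}\big|_0 \vn_t(f) = \phi\,\on^2 f(\vn,\vn) - \la \on f, \n^\Sigma \phi\ra,
\eee
the tangential term reflecting the rotation $\tfrac{d\vn_t}{dt}\big|_0 = -\n^\Sigma \phi$ forced by $|\vn_t|_{\ol g}\equiv 1$; and (c) $\tfrac{d}{dt}\big|_0 h\circ X_t = \phi\,\vn(h)$. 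Applying (b) with $f = \log w$ and using $\on^2(\log w)(\vn,\vn) = w^{-1}\on^2 w(\vn,\vn) - (\vn \log w)^2$, then substituting into the bracket, reproduces exactly the integrand of (ii), and $I''(0)\ge 0$ finishes the proof. The main bookkeeping point is orienting $\vn$ so that the variation enlarges $\Omega$ in the direction of $\vn$ — this fixes the signs in front of $h$ and of the $w^a$ weight and ensures that $\la\on\log w,\n^\Sigma\phi\ra$ coincides with the $\la\on\log w,\n\phi\ra$ appearing in the statement, since only the tangential component of $\on\log w$ pairs nontrivially with $\n\phi$; the remaining manipulations are routine product-rule calculations combined with the rotation identity for $\vn_t$.
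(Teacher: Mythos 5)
Your derivation is correct: the paper does not prove this lemma but simply quotes the first and second variation formulas from \cite{ChodoshLiMinterStryker2024,Mazet2024}, and your computation (first variation of the weighted functional, then differentiating $I'(t)=\int_{\Sigma_t}w^a\phi_t[H_t+a\vn_t(\log w)-h]\,d\mu_t$ and using that the bracket vanishes at $t=0$, together with the Jacobi identity, the rotation identity $\tfrac{d\vn_t}{dt}\big|_0=-\n^\Sigma\phi$, and $\on^2(\log w)(\vn,\vn)=w^{-1}\on^2w(\vn,\vn)-(\vn\log w)^2$) is exactly the standard argument carried out in those references, with all signs matching the statement. No gaps.
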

By \cite{Mazet2024},   we have the following:

\begin{lma}\label{l-bubble-2} If $1\le a<4$, then for any smooth $\psi$ on $\Sigma$:
\bee
\begin{split}
\frac{4}{4-a}\int_\Sigma|\n^\Sigma \psi|^2\ge&\int_\Sigma \bigg[(|A|^2+\ol\Ric^M(\vn,\vn)+a(\vn (\log w))^2+ aW +aH \vn(\log w)+\vn(h)\bigg]\psi^2\\
=:&\int_\Sigma E\psi^2
\end{split}
\eee

\end{lma}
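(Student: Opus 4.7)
The plan is to derive the stated inequality from the second variational formula of Lemma \ref{l-bubble-1}(ii) in two stages: first, eliminate the normal Hessian $\on^2 w(\vn,\vn)$ using the defining equation \eqref{e-w}; second, perform the weighted substitution $\phi = w^{-a/2}\psi$ and absorb the remaining drift via a sharp Cauchy-Schwarz. This is the classical weighted-stability manipulation in the style of Schoen-Yau.

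In the first stage, I would decompose the ambient Laplacian along $\Sigma$,
\[
\ol\Delta w = \Delta_\Sigma w + \on^2 w(\vn,\vn) + H\,\vn(w),
\]
and combine with $\ol\Delta w = -Ww$ from \eqref{e-w} to rewrite
\[
-aw^{-1}\on^2 w(\vn,\vn) = aW + aw^{-1}\Delta_\Sigma w + aH\,\vn(\log w).
\]
Substituted into Lemma \ref{l-bubble-1}(ii), the bracketed integrand becomes $E + aw^{-1}\Delta_\Sigma w$, where $E$ is the desired integrand; in particular the term $aH\vn(\log w)$ sits cleanly next to the contribution $a(\vn(\log w))^2$ already present.

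In the second stage, I would insert $\phi = w^{-a/2}\psi$. A direct expansion (in which the mixed $\n^\Sigma w$-terms cancel thanks to the symmetric $a/2$) yields
\[
\int_\Sigma w^a(-\phi\Delta_\Sigma\phi) = \int_\Sigma |\n^\Sigma\psi|^2 - \frac{a^2}{4}\int_\Sigma w^{-2}\psi^2|\n^\Sigma w|^2,
\]
and the drift term $-aw^a\langle\n^\Sigma\log w,\n^\Sigma\phi\rangle\phi$ becomes $\frac{a^2}{2}w^{-2}\psi^2|\n^\Sigma w|^2 - aw^{-1}\psi\langle\n^\Sigma w,\n^\Sigma\psi\rangle$. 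Performing one integration by parts on the surplus term $a\int w^{-1}\psi^2\Delta_\Sigma w$ to convert it into $a\int w^{-2}\psi^2|\n^\Sigma w|^2 - 2a\int w^{-1}\psi\langle\n^\Sigma w,\n^\Sigma\psi\rangle$, and consolidating all coefficients, I arrive at
\[
\int_\Sigma |\n^\Sigma\psi|^2 + a\int_\Sigma w^{-1}\psi\langle\n^\Sigma w,\n^\Sigma\psi\rangle + \frac{a(a-4)}{4}\int_\Sigma w^{-2}\psi^2|\n^\Sigma w|^2 \ge \int_\Sigma E\psi^2.
\]

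The last step is a pointwise Cauchy-Schwarz
\[
aw^{-1}\psi\langle\n^\Sigma w,\n^\Sigma\psi\rangle \le \tfrac{a\alpha}{2}|\n^\Sigma\psi|^2 + \tfrac{a}{2\alpha}w^{-2}\psi^2|\n^\Sigma w|^2
\]
with the unique sharp choice $\alpha = \frac{2}{4-a}$; this value, valid precisely because $1 \le a < 4$, simultaneously annihilates the coefficient of $w^{-2}\psi^2|\n^\Sigma w|^2$ (since $\frac{a(a-4)}{4}+\frac{a(4-a)}{4}=0$) and produces the prefactor $1 + \frac{a}{4-a} = \frac{4}{4-a}$ on $|\n^\Sigma\psi|^2$. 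The main obstacle is the bookkeeping of the several integration-by-parts and substitution steps between the raw second variation and the cleaned-up inequality; once the coefficients are tracked consistently the sharp value of $\alpha$ is essentially forced by the requirement that the $|\n^\Sigma w|^2$ term disappear.
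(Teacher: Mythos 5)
Your derivation is correct, and in fact more informative than what the paper offers: the paper's ``proof'' of Lemma~\ref{l-bubble-2} is a bare citation to Mazet~\cite{Mazet2024}, display~(12). The three-stage argument you lay out --- eliminate $\on^2 w(\vn,\vn)$ via the hypersurface Laplacian decomposition
\[
\ol\Delta w = \Delta_\Sigma w + \on^2 w(\vn,\vn) + H\,\vn(w)
\]
together with $\ol\Delta w=-Ww$; substitute $\phi=w^{-a/2}\psi$ and integrate by parts to collect
\[
\int_\Sigma E\psi^2 \le \int_\Sigma |\n^\Sigma\psi|^2 + a\int_\Sigma w^{-1}\psi\langle\n^\Sigma w,\n^\Sigma\psi\rangle + \frac{a(a-4)}{4}\int_\Sigma w^{-2}\psi^2|\n^\Sigma w|^2;
\]
and then apply Cauchy--Schwarz with the unique choice $\alpha=\frac{2}{4-a}$ --- is precisely the Schoen--Yau-type manipulation behind Mazet's~(12), and the coefficient bookkeeping checks out (e.g.\ $-\tfrac{a^2}{4}+\tfrac{a^2}{2}-a=\tfrac{a(a-4)}{4}$, $1+\tfrac{a}{4-a}=\tfrac{4}{4-a}$, and the cancellation $\tfrac{a(a-4)}{4}+\tfrac{a(4-a)}{4}=0$).

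Two minor remarks. First, your narration of the ``surplus term'' is written with the opposite sign to what actually enters on the right-hand side (the integrand after stage one contributes $-a\int w^{-1}\Delta_\Sigma w\,\psi^2$, not $+a\int w^{-1}\Delta_\Sigma w\,\psi^2$); your final consolidated inequality is nevertheless correct, so this is a wording slip rather than an error. Second, the decomposition $\ol\Delta w=\Delta_\Sigma w+\on^2 w(\vn,\vn)+H\vn(w)$ carries an $H$-sign convention (positive mean curvature for the outward sphere); your choice is consistent with the convention that makes the first-variation identity $H=h-a\vn(\log w)$ and the term $+aH\vn(\log w)$ in $E$ come out as stated in the paper. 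It would be worth a one-line remark pinning down that convention so the reader does not have to reverse-engineer it from the final formula.
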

\begin{proof} This is just \cite[(12)]{Mazet2024}.
\end{proof}

Recall that in a Riemannian manifold,  the bi-Ricci curvature is defined as:
\be\label{e-biRicci}
\mathrm{BiRic}(e_1,e_2)=\Ric(e_1,e_1)+\Ric(e_2,e_2)-R(e_1,e_2,e_2,e_1)
\ee
for any orthonormal pair $e_1, e_2$.

\begin{lma}\label{l-biRicci} With same notation as in Lemma \ref{l-bubble-3}, let $e_1,\cdots, e_{n-1}$ be orthonormal in $\Sigma$. Then
\bee
\ol\Ric^M(\vn,\vn)=\ol{\mathrm{BiRic}}^M(\vn,e_1)-\Ric^\Sigma(e_1,e_1)+\sum_{j=2}^{n-1}(A_{11}A_{jj}-A_{1j}^2)
\eee
Here $\ol{\mathrm{BiRic}}^M$ is the bi-Ricci curvature of $M$, $\vn$ is a unit normal of $\Sigma$ in $M$ and $A$ is the second fundamental form with respect to $\vn$.
\end{lma}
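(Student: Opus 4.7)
The plan is to derive the identity from two standard ingredients: the definition of the bi-Ricci curvature and the Gauss equation relating the intrinsic and extrinsic curvatures of $\Sigma \subset M$. Complete $\{e_1,\dots,e_{n-1}\}$ to an orthonormal frame of $TM$ by adjoining $\vn$. The definition \eqref{e-biRicci} gives immediately
\bee
\ol\Ric^M(\vn,\vn)=\ol{\mathrm{BiRic}}^M(\vn,e_1)-\ol\Ric^M(e_1,e_1)+\ol R^M(\vn,e_1,e_1,\vn),
\eee
so the task reduces to showing
\bee
\ol\Ric^M(e_1,e_1)-\ol R^M(\vn,e_1,e_1,\vn)=\Ric^\Sigma(e_1,e_1)-\sum_{j=2}^{n-1}(A_{11}A_{jj}-A_{1j}^2).
\eee

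To get this, I would first expand $\ol\Ric^M(e_1,e_1)$ in the chosen frame as $\sum_{j=2}^{n-1}\ol R^M(e_1,e_j,e_j,e_1)+\ol R^M(e_1,\vn,\vn,e_1)$. The last term cancels exactly with $\ol R^M(\vn,e_1,e_1,\vn)$ after using the symmetries of the curvature tensor, leaving only the tangential sum. Next I would apply the Gauss equation
\bee
R^\Sigma(e_1,e_j,e_j,e_1)=\ol R^M(e_1,e_j,e_j,e_1)+A_{11}A_{jj}-A_{1j}^2
\eee
for each $j=2,\dots,n-1$, and sum. Since $\sum_{j=2}^{n-1}R^\Sigma(e_1,e_j,e_j,e_1)=\Ric^\Sigma(e_1,e_1)$, this yields precisely the required identity. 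Substituting back into the bi-Ricci relation completes the proof.

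There is no real obstacle here; the only thing to watch is keeping the sign conventions of the Gauss equation and of the Kulkarni--Nomizu product consistent with those used elsewhere in the paper, and making sure the index $j=1$ is correctly excluded from the sum so that the vanishing term $A_{11}A_{11}-A_{11}^2=0$ does not appear.
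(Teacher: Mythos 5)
Your proof is correct and follows essentially the same route as the paper's: both rest on the Gauss equation applied to each tangential plane $\mathrm{span}\{e_1,e_j\}$ and the definition of bi-Ricci, with the only difference being that you start from the bi-Ricci identity and substitute in, whereas the paper begins by expanding $\Ric^\Sigma(e_1,e_1)$ and rearranging. The sign conventions you use match the paper's Gauss equation $R^\Sigma_{1jj1}=\oR^M_{1jj1}+A_{11}A_{jj}-A_{1j}^2$, and the cancellation $\oR^M(e_1,\vn,\vn,e_1)=\oR^M(\vn,e_1,e_1,\vn)$ is a correct use of the pair symmetry, so nothing is missing.
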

\begin{proof}
\bee
\begin{split}
 \Ric^\Sigma(e_1,e_1)
=&\sum_{j=2}^{n-1}R^\Sigma_{ijj1}\\
=&\sum_{j=2}^{n-1}\oR^M_{1jj1}+\sum_{j=2}^{n-1}(A_{11}A_{jj}-A_{1j}^2)\\
=&\ol\Ric^M(e_1,e_1)-\oR^M(e_1,\vn,\vn,e_1)+\sum_{j=2}^{n-1}(A_{11}A_{jj}-A_{1j}^2)\\
=&\ol\Ric^M(e_1,e_1)+\ol\Ric^M(\vn,\vn)-\oR^M(e_1,\vn,\vn,e_1)-\ol\Ric^M(\vn,\vn)+\sum_{j=2}^{n-1}(A_{11}A_{jj}-A_{1j}^2)
\end{split}
\eee
So
\bee
\ol\Ric^M(\vn,\vn)=\ol{\mathrm{BiRic}}^M(e_1,\vn)+\sum_{j=2}^{n-1}(A_{11}A_{jj}-A_{1j}^2)-\Ric^\Sigma(e_1,e_1).
\eee
\end{proof}

\begin{lma}\label{l-bubble-3} For any $0\le \a\le 1$, and let $e_1,\cdots, e_{n-1}$ be an orthonormal frame on $\Sigma$. Let $E$ be as in Lemma \ref{l-bubble-2}, then
\bee
\begin{split}
E\ge&  aW+\a\ol{\mathrm{BiRic}}^M(e_1,\vn))+(1-\a) \ol\Ric^M(\vn,\vn)-\a \Ric^\Sigma(e_1,e_1)+Q\\
\end{split}
\eee
where $Q$ is a quadratic form defined in Theorem \ref{t-main}.
\end{lma}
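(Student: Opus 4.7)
The plan is to expand the expression for $E$ from Lemma~\ref{l-bubble-2}, use Lemma~\ref{l-biRicci} to trade part of $\ol\Ric^M(\vn,\vn)$ for the bi-Ricci and $\Ric^\Sigma$ terms that appear on the right-hand side (plus a Gauss-type remainder involving $A_{11}A_{jj}-A_{1j}^2$), eliminate $\vn(\log w)$ in favor of $H$ and $h$ via the first-variation identity, and then match the quadratic form $Q$ after a single Cauchy--Schwarz.

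First I would multiply the identity of Lemma~\ref{l-biRicci} by $\alpha\in[0,1]$ to write
\begin{equation*}
\ol\Ric^M(\vn,\vn)=(1-\alpha)\ol\Ric^M(\vn,\vn)+\alpha\,\ol{\mathrm{BiRic}}^M(e_1,\vn)-\alpha\,\Ric^\Sigma(e_1,e_1)+\alpha\sum_{j=2}^{n-1}(A_{11}A_{jj}-A_{1j}^2),
\end{equation*}
which produces the three curvature contributions on the right-hand side of the lemma. What remains is to bound
\begin{equation*}
|A|^2+\alpha\sum_{j=2}^{n-1}(A_{11}A_{jj}-A_{1j}^2)+a(\vn\log w)^2+aH\,\vn(\log w)+\vn(h)
\end{equation*}
from below by $Q$, with the term $\vn(h)$ simply carried along for later pointwise control by $-|\on h|$ in the proof of Theorem~\ref{t-main}.

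Next, the first-variation identity $a\,\vn(\log w)=h-H$ of Lemma~\ref{l-bubble-1}(i) turns the middle two summands into
\begin{equation*}
\frac{(h-H)^2}{a}+H(h-H)=\frac{h^2}{a}+\lf(1-\frac{2}{a}\ri)hH+\lf(\frac{1}{a}-1\ri)H^2,
\end{equation*}
producing the $h^2$, $hH$, and a first part of the $H^2$ coefficient of $Q$. For the second-fundamental-form piece I would set $z:=A_{11}$ and split $|A|^2=z^2+2\sum_{j\ge 2}A_{1j}^2+\sum_{i,j\ge 2}A_{ij}^2$. Since the $(n-2)\times(n-2)$ block $(A_{ij})_{i,j\ge 2}$ has trace $H-z$, Cauchy--Schwarz gives $\sum_{i,j\ge 2}A_{ij}^2\ge (H-z)^2/(n-2)$; combining with $\alpha\sum_{j\ge 2}(A_{11}A_{jj}-A_{1j}^2)=\alpha z(H-z)-\alpha\sum_{j\ge 2}A_{1j}^2$ and discarding the non-negative remainder $(2-\alpha)\sum_{j\ge 2}A_{1j}^2$ (permissible as $0\le \alpha\le 1$) yields
\begin{equation*}
|A|^2+\alpha\sum_{j=2}^{n-1}(A_{11}A_{jj}-A_{1j}^2)\ge z^2+\frac{(H-z)^2}{n-2}+\alpha z(H-z).
\end{equation*}
Expanding the right-hand side contributes $\lf(\frac{n-1}{n-2}-\alpha\ri)z^2+\frac{H^2}{n-2}+\lf(\alpha-\frac{2}{n-2}\ri)zH$, and adding the second $H^2$-piece gives the combined coefficient $\frac{1}{n-2}+\frac{1}{a}-1=\frac{1}{a}-\frac{n-3}{n-2}$, exactly as in $Q$.

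No step is analytically deep; the argument is rearrangement plus one Cauchy--Schwarz. The only substantive choice is $z=A_{11}$ together with the trace inequality on the $(n-2)\times(n-2)$ block, which is where the dimension enters and which produces the coefficient $\frac{1}{a}-\frac{n-3}{n-2}$ of $H^2$ whose sign is the source of the obstruction at $n=6$ flagged in the introduction. The main obstacle is therefore the careful coefficient bookkeeping needed to verify that the five resulting monomials in $(H,z,h)$ match $Q$ exactly.
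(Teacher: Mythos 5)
Your proof follows the same route as the paper: split $\ol\Ric^M(\vn,\vn)$ into an $\alpha$-weighted piece traded via Lemma~\ref{l-biRicci}, use the first-variation identity to eliminate $\vn(\log w)$, and bound the second-fundamental-form terms via $\sum_{i,j\ge 2}A_{ij}^2\ge (H-z)^2/(n-2)$ after discarding $(2-\alpha)\sum_{j\ge 2}A_{1j}^2\ge 0$; the coefficients you obtain match $Q$ exactly. Your observation that $\vn(h)$ (equivalently $-|\on h|$) must be carried alongside $Q$ rather than absorbed into it is consistent with how the paper actually uses the estimate in Theorem~\ref{t-main}, and correctly flags a small notational slip in the lemma as stated.
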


\begin{proof} By Lemma \ref{l-bubble-1},  $\vn(\log w)=a^{-1}(h-H)$,  write
$$
\ol\Ric^M(\vn,\vn)=\a\ol\Ric^M(\vn,\vn)+(1-\a)\ol\Ric^M(\vn,\vn).
$$
By  Lemma \ref{l-bubble-3},
\bee
\begin{split}
E\ge& aW+\a\ol{\mathrm{BiRic}}^M(e_1,\vn))+(1-\a) \ol\Ric^M(\vn,\vn) -\a \Ric^\Sigma(e_1,e_1)\\
&+\a\sum_{j=2}^{n-1}(A_{11}A_{jj}-A_{1j}^2)+|A|^2+\frac1a(H-h)^2-H(H-h)-|\on h|\\
\ge&aW+\a\ol{\mathrm{BiRic}}^M(e_1,\vn))+(1-\a) \ol\Ric^M(\vn,\vn) -\a \Ric^\Sigma(e_1,e_1)\\
&+\a (A_{11}H-\sum_{j=1}^nA_{1j}^2)+|A|^2+\frac1a(H-h)^2-H(H-h)-|\on h|\\
\end{split}
\eee
Now:
\bee
\begin{split}
\a (A_{11}H-\sum_{j=1}^nA_{1j}^2)+|A|^2\ge& \a  A_{11}H +\sum_{j=2}^{n-1}A_{jj}^2+(1-\a)A_{11}^2\\
\ge&\a  A_{11}H+\frac1{n-2}(H-A_{11})^2+(1-\a)A_{11}^2\\
\ge&\frac1{n-2}H^2+(1-\a+\frac1{n-2})A_{11}^2+(\a-\frac2{n-2})A_{11}H.
\end{split}
\eee
So
\bee
\begin{split}
E\ge&aW+\a\ol{\mathrm{BiRic}}^M(e_1,\vn))+(1-\a) \ol\Ric^M(\vn,\vn) -\a \Ric^\Sigma(e_1,e_1)\\
 &+\frac1{n-2}H^2+(1-\a+\frac1{n-2})A_{11}^2+(\a-\frac2{n-2})A_{11}H\\
 &+\frac1a(H^2-2Hh+h^2)-H^2+Hh-|\on h|\\
 =&aW+\a\ol{\mathrm{BiRic}}^M(e_1,\vn))+(1-\a) \ol\Ric^M(\vn,\vn) -\a \Ric^\Sigma(e_1,e_1)\\
 &+\lf(\frac1{n-2}+\frac1a-1\ri)H^2+(1-\a+\frac1{n-2})A_{11}^2+(\a-\frac2{n-2})A_{11}H\\
 &+(1-\frac2a) Hh+\frac1ah^2  -|\on h|.
\end{split}
\eee
  From this the result follows with $z=A_{11}$.

\end{proof}

\section{
Proof of Theorem \ref{t-main}  }\label{s-main}
To prove Theorem \ref{t-main}, it remains to estimate $$ aW+\a\ol{\mathrm{BiRic}}^M(e_1,\vn)+(1-\a)\ol\Ric^M(\vn,\vn),$$ where $W$ is given by Lemma \ref{l-stability} and $\ol{\mathrm{BiRic}}^M$ is the bi-Ricci curvature and $\ol \Ric^M$ is the Ricci curvature of $M$.

\begin{lma}\label{l-W+BiRic} For $a\ge1$, $0\le \a\le1$,  let $\ol e_i, 1\le i\le n$ be an orthonormal basis for $M$ with respect to $\ol g$. Then
\bee
\begin{split}
&aW+\a \ol{\mathrm{BiRic}}^M(\ol e_1,\ol e_2)+(1-\a)\ol\Ric^M(\ol e_1,\ol e_1)\\
\ge &(n-2)\lf(\a-\frac{(n-2)a}4\ri)+P\\
\end{split}
\eee
where $P$ is a quadratic form as defined in Theorem \ref{t-main}.

\end{lma}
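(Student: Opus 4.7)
The strategy is to expand every curvature on the left-hand side in terms of $ds$ and the second fundamental form $\ol\cK$ of $M$ in $(\R^{n+1},\ol G)$, exploit $\ol H=-n\omega$ from Lemma~\ref{l-second fundamental}, and finally absorb the residual freedom in $\ol\cK$ via a single Cauchy--Schwarz step on its diagonal.

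At each point of $M$ I would choose an orthonormal frame $\{\ol e_1,\dots,\ol e_n\}$ of $(M,\ol g)$ with $\ol e_1$ parallel to $\on s$ (when nonzero), so that $ds(\ol e_1)^2=|\on s|^2_{\ol g}=1-\omega^2$ and $ds(\ol e_i)=0$ for $i\ge 2$. From Lemma~\ref{l-curvature-conformal} one computes, for tangent $Y,Z$,
\[
\ol{\mathrm{Ric}}(Y,Z)=(n-1)[\ol G(Y,Z)-ds(Y)ds(Z)],\quad \oRm(\ol\bn,Y,Z,\ol\bn)=(1-\omega^2)\ol g(Y,Z)-ds(Y)ds(Z).
\]
Combined with the Gauss equation and $\ol H=-n\omega$, this yields
\[
\ol\Ric^M(\ol e_i,\ol e_i)=(n-2)+\omega^2-(n-2)ds(\ol e_i)^2-n\omega h_{ii}-\sum_j h_{ij}^2,
\]
\[
\ol R^M(\ol e_1,\ol e_2,\ol e_2,\ol e_1)=\omega^2+h_{11}h_{22}-h_{12}^2,
\]
where $h_{ij}=\ol\cK(\ol e_i,\ol e_j)$.

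Next I would write the left-hand side of the lemma as $aW+\ol\Ric^M(\ol e_1,\ol e_1)+\alpha[\ol\Ric^M(\ol e_2,\ol e_2)-\ol R^M(\ol e_1,\ol e_2,\ol e_2,\ol e_1)]$ and substitute. With the chosen frame, the pure constants collapse to exactly $(n-2)(\alpha-\tfrac{(n-2)a}{4})$, and what remains is a $[(n-1)-\tfrac{a(n^2+4n-4)}{4}]\omega^2$ contribution together with
\[
T:=a|\ol\cK|^2_{\ol g}-\sum_j h_{1j}^2-\alpha\sum_j h_{2j}^2-n\omega h_{11}-\alpha n\omega h_{22}-\alpha h_{11}h_{22}+\alpha h_{12}^2.
\]
Decomposing $|\ol\cK|^2=\sum_i h_{ii}^2+2\sum_{i<j}h_{ij}^2$, every off-diagonal square in $T$---namely $h_{12}^2$, $h_{1j}^2$ and $h_{2j}^2$ for $j\ge 3$, and $h_{ij}^2$ for $3\le i<j$---comes with a nonnegative coefficient under $1\le a<4$ and $0\le\alpha\le 1$ (the tightest cases being $2a-1-\alpha\ge 0$ and $2a-\alpha\ge 0$), so I drop them all.

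For the surviving $a\sum_{i\ge 3}h_{ii}^2$ I would use the trace constraint $\sum_{i\ge 3}h_{ii}=\ol H-h_{11}-h_{22}=-n\omega-h_{11}-h_{22}$ together with Cauchy--Schwarz to obtain
\[
a\sum_{i\ge 3}h_{ii}^2\ge\frac{a}{n-2}(n\omega+h_{11}+h_{22})^2.
\]
Expanding this square supplies exactly the cross terms $\tfrac{2a}{n-2}h_{11}h_{22}$, $\tfrac{2an}{n-2}\omega h_{11}$, $\tfrac{2an}{n-2}\omega h_{22}$, and an additional $\tfrac{an^2}{n-2}\omega^2$. Collecting everything and setting $x=h_{11}$, $y=h_{22}$, the $\omega^2$ coefficient becomes $n-1+(\tfrac{n^2}{n-2}-\tfrac{n^2+4n-4}{4})a$ and the remaining quadratic in $(x,y,\omega)$ agrees with $P(x,y,\omega)$ coefficient by coefficient. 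The main obstacle is purely algebraic bookkeeping: one must simultaneously verify all the coefficient matches for $P$ while checking that every discarded square truly has a nonnegative coefficient throughout the range $1\le a<4$, $0\le\alpha\le 1$.
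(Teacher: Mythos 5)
The computations in your proposal are correct and match the paper's coefficient by coefficient, but the logic of choosing a frame with $\ol e_1$ parallel to $\on s$ is a genuine error. Lemma~\ref{l-W+BiRic} is a \emph{universally quantified} statement: it must hold for the orthonormal basis that arises in the application, namely $\ol e_1 = \vn$ (the $\mu$-bubble normal in $M$) and $\ol e_2$ the direction in $\Sigma$ minimizing $\Ric^\Sigma$ — see Lemma~\ref{l-bubble-3} and the proof of Theorem~\ref{t-main}. That basis has no reason to be aligned with $\on s$. Your expressions for $\ol\Ric^M(\ol e_i,\ol e_i)$ correctly carry the term $-(n-2)\,ds(\ol e_i)^2$ for a general frame, but then you discharge it by \emph{setting} $ds(\ol e_1)^2 = 1-\omega^2$ and $ds(\ol e_j)=0$ for $j\ge 2$, and your formula $\oR^M(\ol e_1,\ol e_2,\ol e_2,\ol e_1) = \omega^2 + h_{11}h_{22}-h_{12}^2$ has already specialized to that frame (in general it is $1-\ol e_1^2(s)-\ol e_2^2(s)+h_{11}h_{22}-h_{12}^2$). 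Proving the inequality for your adapted frame does not yield it for the frame actually needed, because both sides depend on the frame (the LHS through $\ol e_1,\ol e_2$, the RHS through $x=h_{11}$, $y=h_{22}$); there is no monotonicity argument that transports the inequality from one frame to another.

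The fix is exactly what the paper does: keep the frame arbitrary, collect all the $ds(\ol e_i)^2$-dependent pieces, which enter with nonpositive coefficients, namely
\[
-\a(n-3)\bigl(\ol e_1^2(s)+\ol e_2^2(s)\bigr)-(1-\a)(n-2)\,\ol e_1^2(s)-(1+\a)\,|\on s|^2_{\ol g},
\]
and bound them from below by $-(n-1)\,|\on s|^2_{\ol g} = -(n-1)(1-\omega^2)$ using $\ol e_1^2(s)+\ol e_2^2(s)\le |\on s|^2_{\ol g}$ and $\a(n-3)+(1-\a)(n-2)+1+\a = n-1$. Your adapted frame is precisely the worst case that saturates this bound, which is why your final constants and the quadratic $P$ come out right; but a proof must \emph{establish} that worst-case bound, not assume the extremal frame. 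Once this replacement of ``choose a frame'' by ``bound the frame-dependent terms'' is made, the rest of your argument — dropping off-diagonal squares (note the coefficient of $h_{12}^2$ is $2a-1$, not $2a-1-\a$, but it is still nonnegative), Cauchy--Schwarz on $\sum_{i\ge 3}h_{ii}^2$ via the trace constraint, and the coefficient matching — coincides with the paper's proof.
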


\begin{proof} Let $\ol e_i, 1\le i\le n$ be an orthonormal basis for $M$ with respect to $\ol g$.
Let $M$ be a hypersurface with second fundamental form $\ol\cK$ with respect to $\ol G$ unit normal $\ol \bn$ (then $\ol\bn=e^s\bn$ where $\bn$ is the unit normal with respect to $G$). Let $\omega=\ol\bn(s)=-\frac1n\ol H$. By Lemma \ref{l-curvature-conformal}, for $1\le i, j\le n$, with $i\neq j$, the curvature with respect to $\ol G$ is given by:
\bee
\oR(\ol e_i,\ol e_j,\ol e_j,\ol e_i)=1-\ol e_i^2(s)-\ol e_j^2(s).
\eee
By the Gauss equation:
\bee
 1-\ol e_i^2(s)-\ol e_j^2(s)=\oR^M_{ijji} -\ol\cK_{ii}\ol\cK_{jj}+\ol\cK_{ij}^2.
 \eee
 and so
 \be
 \oR^M_{ijji}= 1-\ol e_i^2(s)-\ol e_j^2(s)+\ol\cK_{ii}\ol\cK_{jj}-\ol\cK_{ij}^2.
 \ee
So the bi-Ricci of $M$ is:
\bee
\begin{split}
\ol{\mathrm{BiRic}}^M(\ol e_1,\ol e_2)=&\ol\Ric^M(\ol e_1,\ol e_1)+\ol\Ric^M(\ol e_2,\ol e_2)-\oR^M(\ol e_i,\ol e_j,\ol e_j, \ol e_i)\\
=&\sum_{j\neq 1, 1\le j\le n}\lf(1-\ol e_1^2(s)-\ol e_j^2(s)+\ol\cK_{11}\ol\cK_{jj}-\ol\cK_{1j}^2\ri)\\
&+\sum_{j\neq 2, 1\le j\le n}\lf(1-\ol e_2^2(s)-\ol e_j^2(s)+\ol\cK_{22}\ol\cK_{jj}-\ol\cK_{2j}^2\ri)
\\
&-( 1-\ol e_1^2(s)-\ol e_2^2(s)+\ol\cK_{11}\ol\cK_{22}-\ol\cK_{12}^2).
\end{split}
\eee
Now
\bee
\begin{split}
\sum_{j\neq 1, 1\le j\le n}\lf(1-\ol e_1^2(s)-\ol e_j^2(s)\ri)=&(n-1)-(n-2)\ol e_1^2(s)-\sum_{j=1}^n \ol e_j^2(s)\\
=&(n-1)-(n-2)\ol e_1^2(s)-|\on s|^2.
\end{split}
\eee
$$
\sum_{j\neq 1, 1\le j\le n}\lf( \ol\cK_{11}\ol\cK_{jj}-\ol\cK_{1j}^2\ri)=\ol\cK_{11}\ol H-\sum_{j=1}^n\ol\cK_{1j}^2.
$$
Similarly,
\bee
\begin{split}
\sum_{j\neq 2, 1\le j\le n}\lf(1-\ol e_2^2(s)-\ol e_j^2(s)\ri)=&(n-1)-(n-2)\ol e_2^2(s)-\sum_{j=1}^n \ol e_j^2(s)\\
=&(n-1)-(n-2)\ol e_2^2(s)-|\on s|^2.
\end{split}
\eee
$$
\sum_{j\neq 2, 1\le j\le n}\lf( \ol\cK_{22}\ol\cK_{jj}-\ol\cK_{2j}^2\ri)=\ol\cK_{22}\ol H-\sum_{j=1}^n\ol\cK_{2j}^2.
$$
Hence
\bee
\begin{split}
\ol{\mathrm{BiRic}}^M(\ol e_1,\ol e_2) =& (2n-3)-(n-3)(\ol e_1^2(s)+\ol e_2^2(s))-2|\on s|^2_{\ol g}+(\ol\cK_{11}+\ol \cK_{22})\ol H\\
&-\sum_{j=1}^n (\ol \cK_{1j}^2+\ol\cK_{2j}^2)-\ol\cK_{11}\ol\cK_{12}+\ol\cK_{12}^2\\
\end{split}
\eee
Similarly,

\bee
\begin{split}
 \ol\Ric^M(\ol e_1,\ol e_1)=& \sum_{j\ge 2}\ol{R}^M_{1jj1}\\
=& \sum_{j\ge 2}\lf(1-\ol e_1^2(s)-\ol e_j^2(s)+\ol\cK_{11}\ol\cK_{jj}-\ol\cK_{1j}^2\ri)\\
=&(n-1)-(n-2)\ol e_1^2-|\on s|^2+\ol \cK_{11}\ol H-\sum_{j=1}^n \ol\cK_{1j}^2.
\end{split}
\eee

So
\bee
\begin{split}
&\a \ol{\mathrm{BiRic}}^M(\ol e_1,\ol e_2)+(1-\a)\ol\Ric(\ol e_1,\ol e_1)\\
=&\a(2n-3)+(1-\a)(n-1)-\a(n-3)(\ol e_1^2(s)+\ol e_2^2(s))-(1-\a)(n-2)\ol e_1^2(s)\\
&-(2\a +(1-\a))|\on s|^2+(\ol\cK_{11}+\a \ol\cK_{22})\ol H-\sum_{j=1}^n (\ol\cK_{1j}^2+\a\ol\cK_{2j}^2)
-\a(\ol\cK_{11}\ol\cK_{22}-\ol\cK_{12}^2)\\
\ge &\a(2n-3)+(1-\a)(n-1)-(n-1)|\on s|^2 \\
&+(\ol\cK_{11}+\a \ol\cK_{22})\ol H-\sum_{j=1}^n (\ol\cK_{1j}^2+\a\ol\cK_{2j}^2)
-\a(\ol\cK_{11}\ol\cK_{22}-\ol\cK_{12}^2)\\
=&\a(n-2)+(n-1)\omega^2-n(\ol\cK_{11}+\a \ol\cK_{22})\omega-\sum_{j=1}^n (\ol\cK_{1j}^2+\a\ol\cK_{2j}^2)
-\a(\ol\cK_{11}\ol\cK_{22}-\ol\cK_{12}^2)
\end{split}
\eee
because    $0\le \a\le 1$ and $|\on s|^2=|\ol Ds|^2-(\ol\bn (s))^2=1-\omega^2$. By the definition of $W$ in Lemma \ref{l-stability}, since $a\ge 1$, $0\le \a\le 1$, we have
\bee
\begin{split}
&aW+\a \ol{\mathrm{BiRic}}^M(\ol e_1,\ol e_2)+(1-\a)\ol\Ric^M(\ol e_1,\ol e_1)\\
\ge &a\lf(|\ol\cK|^2-\frac{n^2+4n-4}4\omega^2-\lf(\frac{n-2}2\ri)^2\ri)\\
&+\a(n-2)+(n-1)\omega^2-n(\ol\cK_{11}+\a \ol\cK_{22})\omega-\sum_{j=1}^n (\ol\cK_{1j}^2+\a\ol\cK_{2j}^2)
-\a(\ol\cK_{11}\ol\cK_{22}-\ol\cK_{12}^2)\\
=&\a(n-2)-a\lf(\frac{n-2}2\ri)^2+\lf(n-1-\frac{a(n^2+4n-4)}4 \ri)\omega^2\\
&+a\sum_{i=3}^n\sum_{j=1}^n\ol\cK_{ij}^2+(a-1)\ol\cK_{11}^2+(a-\a)\ol\cK_{22}^2-\a\ol\cK_{11}\ol\cK_{22}-n(\ol\cK_{11}+\a \ol\cK_{22})\omega
\end{split}
\eee
Let $\ol\cK_{11}=x, \ol\cK_{22}=y$,
\bee
\begin{split}
\sum_{i=3}^n\sum_{j=1}^n\ol\cK_{ij}^2\ge& \frac1{n-2}(\ol H-x-y)^2=\frac1{n-2}(\ol H^2-2\ol H(x+y)+(x+y)^2)\\
=&\frac{n^2}{n-2}\omega^2+2\frac n{n-2}(x+y)\omega+\frac1{n-2}(x^2+2xy+y^2)
\end{split}
\eee
Hence, for $a\ge1, 0\le \a\le 1$, using the fact that $\ol H=-n \omega$ with $\omega=\ol{\bn}(s)$ 
\bee
\begin{split}
&aW+\a \ol{\mathrm{BiRic}}^M(\ol e_1,\ol e_2)+(1-\a)\ol\Ric(\ol e_1,\ol e_1)\\
\ge &\a(n-2)-a\lf(\frac{n-2}2\ri)^2+\lf(n-1-\frac{a(n^2+4n-4)}4 +\frac {a n^2}{n-2}\ri)\omega^2 \\
&+(a-1+\frac a{n-2})x^2+(a-\a+\frac a{n-2})y^2+(\frac {2a}{n-2}-\a)xy\\
&+n\lf((\frac {2a}{n-2}-1)x  +(\frac{2a}{n-2}-\a)y \ri)\omega\\
\end{split}
\eee
From this, the result follows.

\end{proof}

\begin{proof}[Proof of Theorem \ref{t-main}] Theorem \ref{t-main} follows from Lemmas \ref{l-bubble-2}, \ref{l-bubble-3}, \ref{l-W+BiRic}  where   we choose $e_1$ so that $\Ric^\Sigma(e_1,e_1)=\lambda_\Sigma$.

\end{proof}

\section{Applications}\label{s-application}

Let us discuss (i)--(iv) mentioned in section \ref{s-intro}.\vskip .2cm

Suppose $n=3$. Let $a=\a=1$.

\begin{enumerate}

\item[(i)] $(n-2)(\a-\frac{(n-2)a}4)=\frac 34$.

\item[(ii)]
\bee
P=\frac{27}4\omega^2
 +   x^2+ y^2+ xy
 +3\lf( x  + y \ri)\omega\\
\ge \frac{27}4\omega^2
 +  \frac34(x+y)^2
 +3\lf( x  + y \ri)\omega\ge0.
 \eee

 \item[(iii)]
 \bee
 Q=H^2+z^2-zH+h^2-hH
 \ge \frac34H^2+h^2-hH
 \ge \frac23 h^2.
 \eee

 \item[(iv)] Need to find suitable $h$ with $\e+\frac23 h^2-|\on h|\ge0$ for some $0<\e<\frac34.$

 \end{enumerate}

Moreover, it is obvious that:
\bee
\frac{4}{\a(4-a)}\le \frac{n-2}{n-3}.
\eee

Suppose $n=4$. Let  $a=\a=1$.

\begin{enumerate}
\item[(i)] $(n-2)(\a-\frac{(n-2)a}4)=1.$
\item[(ii)] $P=4\omega^2+\frac12x^2+\frac12y^2\ge0$.
\item[(iii)] $Q=\frac12H^2+\frac12z^2+h^2-hH \ge \frac12 h^2$.
\item[(iv)] Need to find suitable $h$ with $\frac12 h^2+\e-|\on h|\ge0$, for some $\e<1$.
\end{enumerate}

Moreover,
 \bee
\frac{4}{\a(4-a)}=\frac43\le \frac{4-2}{4-3}
\eee

Suppose $n=5$. As mentioned before, we cannot take $\a=1$. We may let $\a=\frac{4}{4-a}\cdot \frac 32$ and find a suitable $a$, if it exists, so that (ii), (iii) are true. As in Mazet's work \cite{Mazet2024}, one may take $a=11/10$. Then  let $\a=\frac{4}{4-a}\cdot \frac 32=\frac{80}{87}. $ A motivation for the choice of $a$ is as follows. If  we let  $\b=\frac1a$, then $aW+\ol{\mathrm{BiRic}}^M(\ol e_1,\ol e_2)=a(W+\b\ol{\mathrm{BiRic}}^M(\ol e_1,\ol e_2))$ and
 \bee
 \begin{split}
 &W+ \b\ol{\mathrm{BiRic}}^M(\ol e_1,\ol e_2)\\
 \ge& (n-2)(\b- \frac{ n-2}4) +\lf(\frac1{n-2}+\frac{4\b(n-1)-n^2-4n+4}{4n^2}\ri)\ol H^2\\
 &+\lf(\b-\frac{2}{n-2}\ri) \ol H x+(\frac1{n-2}-\frac\b4) x^2\
 \end{split}
 \eee
where $x=\ol\cK_{11}+\ol\cK_{22}$.
For $n=5$, we need $\b>\frac34$ so that the constant term is positive. Let $\b=\frac34+b$, $b>0$. Then
\bee
\begin{split}
W+  \b\ol{\mathrm{BiRic}}^M(\ol e_1,\ol e_2)\ge&3b +\lf(\frac{4b}{25}+\frac{13}{300}\ri)\ol H^2+(b+\frac1{12})\ol Hx+(\frac 7{48}-\frac b4)x^2
\end{split}
\eee
Want to find the largest $b$, if it exists, so that
\bee
(b+\frac1{12})^2-4\lf(\frac{4b}{25}+\frac{13}{300}\ri)(\frac 7{48}-\frac b4)\le0.
\eee
So we may choose
\bee
2b= \frac{5}{29\times 4} +\sqrt{(\frac{5}{29\times 4})^2+ \frac1{29}\frac{11}{6}}.
=0.2982
\eee
$
b=0.149,
$
$
\b=0.899
$
and   $a=1/\b=1.112347052280311.$ Hence we may try   $a=1.1=11/10.$  Let us consider (i)--(iv) for this choice of $a$, $\a$.

 \begin{enumerate}
   \item[(i)]  $(n-2)(\a-\frac{(n-2)a}4)=\frac{329}{29\times 40}$.
   \item[(ii)]

\bee
\begin{split}
   P
=&\frac{227}{120}\omega^2+\frac1{10\times 87}\lf(406x^2+476y^2-162 xy+5\omega(-232x-162 y)\ri)
\end{split}
\eee
Hence by Lemma \ref{l-Mazet} below:


  \bee
  \begin{split}
  406x^2+476y^2-162 xy+5\omega(-232x-162 y)
\ge&-1418.21875251078\omega^2,
\end{split}
\eee
and
\bee
\begin{split}
P\ge 1.891666666666667\omega^2-1.630136497138827\omega^2\ge0.
\end{split}
\eee

\item[(iii)]
\bee
 \begin{split}
 Q
 =&\frac1{11\times 87}\bigg[232H^2+396z^2+242zH-783 hH\bigg]+\frac{10}{11}h^2.
 \end{split}
 \eee
   Hence by Lemma \ref{l-Mazet},
   \bee
   \begin{split}
   232H^2+396z^2+242zH-783 hH\ge
   \ge &-785.8995869534254 h^2
   \end{split}
   \eee
   So
  \bee
 \begin{split}
 Q\ge   &\frac{10}{11}h^2-0.8212116896065052 h^2 \\
  \ge &0.0878792194844039 h^2.
   \end{split}
   \eee

   \item[(iv)] In \cite{Mazet2024}, a suitable $\wt h$ is constructed so that  $$
   a|\on \wt h|\le \frac3{20}+\frac1{22}\wt h^2.
   $$
   Let $\wt h=\lambda h$, then we have
   \bee
   |\n h|\le \frac1{a\lambda}\frac3{20}+\frac1{22 a}\lambda h^2.
   \eee
   Want
   $\frac1{22 a}\lambda=0.0878792194844039$, so $\lambda=0.0878792194844039\times 22a.$
So
\bee
\frac1{a\lambda}\frac3{20}=\frac1{0.0878792194844039\times 22a^2}\frac3{20}=0.0641205172260532.
\eee

\bee
\frac{329}{29\times 40}=0.2836206896551724>0.0641205172260532.
\eee
Hence we can find $\e<\frac{329}{29\times 40}$ and $h$ so that
\bee
|\on h|\le \e+0.0878792194844039 h^2.
\eee
 \end{enumerate}

 \vskip .2cm

In the above, we have used the following observation in \cite{Mazet2024}.
\begin{lma}\label{l-Mazet} Let $A$ be a positive definite $m\times m$ matrix and let $B$ be a vector in $\R^m$. Then for all $X\in \R^m$
\bee
\la AX,X\ra +\la B,X\ra\ge -\frac14 \la A^{-1}B, B\ra.
\eee
where $\la \ ,\ \ra$ is the standard inner product.
\end{lma}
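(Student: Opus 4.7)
The plan is to prove the lemma by completing the square. Since the quadratic form $X\mapsto\langle AX,X\rangle$ depends only on the symmetric part $\frac12(A+A^T)$ of $A$, and symmetrization preserves positive definiteness, I may assume $A$ is symmetric without loss of generality; then $A^{-1}$ is also symmetric and positive definite, and the self-adjointness relation $\langle AU,V\rangle=\langle U,AV\rangle$ is available.

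The key step is the identity obtained by substituting $Y=X+\frac12 A^{-1}B$ into the quadratic form $\langle AY,Y\rangle$. Expanding by bilinearity, the two cross terms $\langle AX,\frac12 A^{-1}B\rangle$ and $\langle A\cdot\frac12 A^{-1}B,X\rangle$ each equal $\frac12\langle B,X\rangle$, using $AA^{-1}=I$ together with the symmetry of $A$, so they combine to give
\[
\langle AY,Y\rangle = \langle AX,X\rangle + \langle B,X\rangle + \frac14\langle A^{-1}B,B\rangle.
\]
Since $A$ is positive definite, the left-hand side is non-negative, and rearranging yields
\[
\langle AX,X\rangle + \langle B,X\rangle \ge -\frac14\langle A^{-1}B,B\rangle,
\]
with equality precisely when $X=-\frac12 A^{-1}B$.

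There is essentially no obstacle here: this is the textbook completion-of-squares bound for a positive-definite quadratic functional, and the only point requiring mild care is verifying that the cross terms combine correctly, which follows immediately from $AA^{-1}=I$ and symmetry of $A$. Alternatively, one could motivate the choice $Y=X+\frac12 A^{-1}B$ by a calculus argument: the gradient $2AX+B$ of the left-hand side (as a function of $X$) vanishes at $X_\ast=-\frac12 A^{-1}B$, and substituting this critical point into the expression recovers the asserted lower bound.
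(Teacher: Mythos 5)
Your proof is correct: it is the standard completion-of-squares argument, and the reduction to symmetric $A$ at the start is the right way to handle the fact that the lemma does not assume symmetry. The paper itself states this lemma without proof (attributing the observation to Mazet), so there is nothing to compare against, but your argument is the natural one any reader would supply.
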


\end{document}